\newtheorem{theorem}{Theorem}[section]
\newtheorem{lemma}[theorem]{Lemma}
\newtheorem{proposition}[theorem]{Proposition}
\newtheorem{corollary}[theorem]{Corollary}
\theoremstyle{definition}
\title{A note on automorphisms of finite $p$-groups}
\author[G. A. Fern\'{a}ndez-Alcober]{Gustavo A. Fern\'{a}ndez-Alcober}
\address{Gustavo A. Fern\'{a}ndez-Alcober: Department of Mathematics, University of the Basque Country UPV/EHU, 48080 Bilbao, Spain}
\email{gustavo.fernandez@ehu.eus}
\author[A. Thillaisundaram]{Anitha Thillaisundaram} 
\address{Anitha Thillaisundaram: Mathematisches Institut, 
Heinrich-Heine-\break Universit\"{a}t, 40225 D\"usseldorf, Germany}
\email{anitha.t@cantab.net}
\keywords{finite $p$-groups, automorphisms}
\subjclass[2010]{Primary  20D15;  Secondary 20D45}
\thanks{The first author is supported by by the Spanish Government, grants MTM2011-28229-C02-02 and MTM2014-53810-C2-2-P,
and by the Basque Government, grant IT753-13.
The second author, who is funded by the Alexander von Humboldt Foundation, would like to thank the University of the Basque Country for its hospitality.}
\date{26th October 2015}
\begin{document}
\begin{abstract} 
Let $G$ be a finite non-cyclic $p$-group of order at least $p^3$. If $G$ has an abelian maximal subgroup, or if $G$ has an elementary abelian centre with $C_G(Z(\Phi(G))) \ne \Phi(G)$, then $|G|$ divides $|\text{Aut}(G)|$.
\end{abstract}

\maketitle

\section{Introduction}

From the 1970s, the question `Does every finite non-cyclic $p$-group $G$ of order $|G|\ge p^3$ have $|G|$ dividing $|\text{Aut}(G)|$?' began to take form.
Observe that, if $\text{Out}(G)=\text{Aut}(G)/\text{Inn}(G)$ denotes the group of outer automorphisms of $G$, this is equivalent to asking whether $|\text{Out}(G)|$ divides $|Z(G)|$. 
Over the past fifty years, this question was partially answered in the affirmative for specific families of $p$-groups, for instance $p$-abelian $p$-groups, $p$-groups of class 2, $p$-groups of maximal class, etc (see \cite{me} for a fairly up-to-date list).
This led many to believe that the complete answer might be yes, which is why the question was reformulated as a conjecture: ``If $G$ is a finite non-cyclic $p$-group with $|G|\ge p^3$, then $|G|$ divides $|\text{Aut}(G)|$".

What is more, Eick \cite{Eick} proved that all but finitely many 2-groups of a fixed coclass satisfy the conjecture.
Couson \cite{Couson} generalized this to $p$-groups for odd primes, but only to infinitely many $p$-groups of a fixed coclass.
The coclass theory shed new light on the conjecture, and provided more evidence as to why it could be true. Looking at past efforts, it could also be said that an underlying theme was cohomology,
which hinted that the full conjecture might be settled using such means.

However, it came as a surprise that the conjecture is false. Very recently, Gonz\'{a}lez-S\'{a}nchez and Jaikin-Zapirain  \cite{Jon} disproved the conjecture using Lie methods, where the question was first translated
into one for Lie algebras. The main idea was to use the examples of Lie algebras with derivation algebra of smaller dimension, from which they constructed a family of examples of $p$-groups with small automorphism group. 
We remark that these counter-examples are powerful and $p$-central, which means that $G' \le G^p$ and $\Omega_1(G)\le Z(G)$ respectively, if $p$ is odd, and that $G'\le G^4$ and
$\Omega_2(G)\le Z(G)$ respectively, if $p=2$.

Now a new question may be formulated: which other finite non-cyclic $p$-groups $G$ with $|G|\ge p^3$ have $|G|$ dividing
$|\text{Aut}(G)|$?
In this short note, we prove that for $G$ a finite non-cyclic $p$-group with $|G|\ge p^3$, if $G$ has an abelian maximal subgroup, or if $G$ has elementary abelian centre and $C_G(Z(\Phi(G)))\ne \Phi(G)$, then $|G|$ divides $|\text{Aut}(G)|$.
The latter is a partial generalization of Gasch\"{u}tz' result \cite{Gaschuetz} that $|G|$ divides $|\text{Aut}(G)|$ when the centre has order $p$.

\vspace{10pt}

\noindent
\textit{Notation.\/}
We use standard notation in group theory.
All groups are assumed to be finite and $p$ always stands for a prime number.
For $M,N$ normal subgroups in $G$, we set $\text{Aut}_N^M(G)$ to be the subgroup of automorphisms of $G$ that centralize $G/M$ and $N$, and let $\text{Out}^M_N(G)$ be its corresponding image in $\text{Out}(G)$.
When $M$ or $N$ is $Z(G)$, we write just $Z$ for conciseness.
On the other hand, if $G$ is a finite $p$-group then $\Omega_1(G)$ denotes the subgroup generated by all elements of $G$ of order $p$.

%Remark 1.2 does somewhere?

\section{An abelian maximal subgroup}

Let $G$ be a finite $p$-group with an abelian maximal subgroup $A$. We collect here a few well-known results (see \cite[Lemma 4.6 and its proof]{Isaacs}).

\begin{theorem} \label{Theorem63}
 Let $G$ be a group having an abelian normal subgroup $A$ such that the quotient $G/A=\langle gA \rangle$, with $g\in G$, is cyclic. 

Then (i) $ G' = \{ [a,g] \mid a\in A \}$ and (ii) $|G'|=|A:A\cap Z(G)|$.

 \end{theorem}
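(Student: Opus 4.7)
The plan is to exhibit $G'$ as the image of a single homomorphism from $A$, namely the commutator-with-$g$ map, and then read off both statements from the first isomorphism theorem.

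First I would define
\[
\varphi \colon A \longrightarrow A, \qquad \varphi(a) = [a,g].
\]
This lands in $A$ because $A$ is normal, so $g^{-1}ag \in A$ and hence $[a,g] = a^{-1}(g^{-1}ag) \in A$. The key check is that $\varphi$ is a homomorphism: using the standard identity $[ab,g] = [a,g]^b [b,g]$ and the fact that $[a,g] \in A$ already commutes with $b \in A$, we get $\varphi(ab) = \varphi(a)\varphi(b)$. This forces $\{[a,g] \mid a\in A\} = \varphi(A)$ to be a \emph{subgroup} of $A$, which is the crucial structural fact behind (i).

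Next I would show $\varphi(A) = [A,g]$ is normal in $G$ and that it already contains $G'$. Normality in $A$ is automatic since $A$ is abelian; normality under $g$-conjugation follows from $[a,g]^g = [a^g, g] \in \varphi(A)$. To show $G' \le \varphi(A)$, I pass to $\overline{G} = G/\varphi(A)$: there $\bar g$ commutes with every $\bar a$ ($a\in A$) by construction, and since $G = \langle A, g\rangle$ with $A$ abelian, $\overline{G}$ is generated by a central element together with an abelian subgroup, hence abelian. Therefore $G' \le \varphi(A)$, and the reverse inclusion is obvious, giving (i).

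For (ii), I identify the kernel of $\varphi$: $\ker\varphi = \{a\in A : [a,g]=1\} = C_A(g)$. Because $G = \langle A, g\rangle$ and $A$ is abelian, centralizing $g$ is the same as centralizing all of $G$, so $\ker\varphi = A \cap Z(G)$. The first isomorphism theorem then yields $A/(A\cap Z(G)) \cong \varphi(A) = G'$, whence $|G'| = |A : A\cap Z(G)|$. The only place requiring genuine care is the verification that $\varphi$ is a homomorphism and that $[A,g]$ is $g$-invariant; once these are in hand, everything else is bookkeeping.
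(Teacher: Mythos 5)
Your proof is correct and is essentially the standard argument that the paper relies on by citing \cite[Lemma 4.6 and its proof]{Isaacs}: one shows the commutator map $a\mapsto [a,g]$ is a homomorphism on $A$ with image $[A,g]$ (which is normal with abelian quotient, hence equals $G'$) and kernel $C_A(g)=A\cap Z(G)$, and applies the first isomorphism theorem. All the key verifications (the identity $[ab,g]=[a,g]^b[b,g]$, $g$-invariance of $[A,g]$, and $C_A(g)=A\cap Z(G)$) are present and correct.
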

 
 \begin{corollary} \label{Corollary64}
  Let $G$ be a finite non-abelian $p$-group having an abelian maximal subgroup. Then $|G:Z(G)|=p|G'|$.
 \end{corollary}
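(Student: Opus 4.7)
The plan is to combine Theorem \ref{Theorem63} with the fact that, under the non-abelian hypothesis, the abelian maximal subgroup $A$ must be self-centralizing, so $Z(G)$ sits inside $A$ and can be identified with $A\cap Z(G)$.

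First I would observe that, since $A$ is maximal in the $p$-group $G$, it is normal of index $p$, and $G/A=\langle gA\rangle$ is cyclic for any $g\in G\setminus A$. Thus Theorem \ref{Theorem63}(ii) applies and gives $|G'|=|A:A\cap Z(G)|$.

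Next, I would argue that $C_G(A)=A$. Indeed, $A\le C_G(A)$ because $A$ is abelian, and $C_G(A)$ is a subgroup containing the maximal subgroup $A$, so either $C_G(A)=A$ or $C_G(A)=G$. The latter would force $A\le Z(G)$, making $G/Z(G)$ cyclic and hence $G$ abelian, contradicting our hypothesis. Consequently $Z(G)\le C_G(A)=A$, and in particular $Z(G)=A\cap Z(G)$.

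Combining these ingredients,
\[
|G:Z(G)|=|G:A|\cdot|A:Z(G)|=p\cdot|A:A\cap Z(G)|=p|G'|,
\]
which is the desired equality. There is no real obstacle here; the only subtlety is the short self-centralizing argument ruling out $A\le Z(G)$, which is where the non-abelian assumption on $G$ is used.
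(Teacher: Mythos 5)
Your proof is correct and is essentially the standard argument the paper is invoking (via the citation to Isaacs): apply Theorem \ref{Theorem63}(ii) with $G/A$ cyclic of order $p$, note $Z(G)\le C_G(A)=A$ since $G$ is non-abelian, and multiply indices. Nothing further is needed.
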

 
In \cite{key-Webb}, Webb uses the following approach to find non-inner
automorphisms of $p$-power order, which we will use in the forthcoming theorem. For a maximal subgroup $M$ of
$G$, we first define two homomorphisms on $Z(M)$.
Let $g\in G$ be such that $G/M=\langle gM\rangle$,
then\[
\,\tau:m\quad \mapsto\, \quad g^{-p}(gm)^{p}=m^{g^{p-1} + \ldots + g + 1} \]
\[
\quad\quad\quad \qquad \gamma:m\quad \mapsto \qquad\quad  [m,g]=m^{g-1}\qquad \qquad\qquad \qquad\,   \]
for all $m\in Z(M)$.

We have $\text{im }\gamma\subseteq\ker\tau$ and $\text{im }\tau\subseteq\ker\gamma=Z(G)\cap M$.

\begin{corollary} \label{Webb}
\cite{key-Webb} Let $G$ be a finite non-abelian $p$-group and $M$
a maximal subgroup of $G$ containing $Z(G)$. Then $G$ has a non-inner
automorphism of $p$-power order inducing the identity on $G/M$
and $M$ if and only if $\textup{im }\tau\neq\ker\gamma$.
\end{corollary}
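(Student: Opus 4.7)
The plan is to parametrize all automorphisms $\alpha$ of $G$ that induce the identity on both $G/M$ and $M$, identify the inner ones among them, and finally reconcile the resulting condition with the one stated.

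Since $|G:M| = p$, fix $g \in G \setminus M$ with $G/M = \langle gM \rangle$, so each such $\alpha$ is determined by $\alpha(g) = gm_0$ for some $m_0 \in M$. For $\alpha$ to be a homomorphism it must respect two defining relations of $G$ over $M$: conjugation $gmg^{-1} \in M$ for $m \in M$, and the power relation $g^p \in M$. A short computation shows that the first is equivalent to $m_0$ commuting with every element of $M$, i.e.\ $m_0 \in Z(M)$. For the second, since $Z(M)$ is $g$-stable, I can expand
\[
(gm_0)^p \;=\; g^p \cdot m_0^{g^{p-1}} \cdots m_0^g \cdot m_0 \;=\; g^p \, \tau(m_0),
\]
so the condition $\alpha(g)^p = g^p$ becomes $\tau(m_0) = 1$. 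Conversely both conditions suffice. Hence the assignment $\alpha \mapsto m_0$ defines a group isomorphism from the set of such automorphisms onto the abelian $p$-group $\ker \tau$; in particular every such $\alpha$ has $p$-power order automatically.

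Next, I would identify the inner $\alpha$ in this set. If $\alpha$ is conjugation by $c \in G$ then $c \in C_G(M)$, and $c \notin M$ would force $G = \langle M, c \rangle$ by maximality, hence $c \in Z(G) \subseteq M$, a contradiction. So $c \in Z(M)$, and a direct calculation yields the associated parameter $m_0 = [g,c] = \gamma(c)^{-1} \in \mathrm{im}\,\gamma$; conversely every element of $\mathrm{im}\,\gamma$ arises this way. Therefore a non-inner automorphism of the required shape exists if and only if $\ker \tau \ne \mathrm{im}\,\gamma$.

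Finally, I convert this into the stated condition $\mathrm{im}\,\tau \ne \ker \gamma$. The two inclusions $\mathrm{im}\,\gamma \subseteq \ker \tau$ and $\mathrm{im}\,\tau \subseteq \ker \gamma$ are already noted in the excerpt, so each putative equality is equivalent to an equality of orders. Applying the first isomorphism theorem to $\tau$ and to $\gamma$ on $Z(M)$ gives
\[
|\mathrm{im}\,\tau|\cdot|\ker \tau| \;=\; |Z(M)| \;=\; |\mathrm{im}\,\gamma|\cdot|\ker \gamma|,
\]
which rearranges to $|\ker \tau|/|\mathrm{im}\,\gamma| = |\ker \gamma|/|\mathrm{im}\,\tau|$. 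Thus one ratio is $1$ precisely when the other is, completing the equivalence. The main obstacle I expect is the careful bookkeeping in the first step: verifying that conjugation by $g$ preserves $Z(M)$, so that every conjugate $m_0^{g^i}$ lies in $Z(M)$ and the product $(gm_0)^p$ collapses cleanly to $g^p\tau(m_0)$. After that, the parametrisation is explicit and the final Herbrand-style cardinality count is routine.
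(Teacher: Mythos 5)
Your proof is correct, and it is essentially the argument of Webb's cited paper, which the authors invoke without reproducing: parametrise $\mathrm{Aut}_M^M(G)$ by $\ker\tau$, identify the inner ones with $\mathrm{im}\,\gamma$, and pass from $\ker\tau\neq\mathrm{im}\,\gamma$ to $\mathrm{im}\,\tau\neq\ker\gamma$ via $|\mathrm{im}\,\tau|\,|\ker\tau|=|Z(M)|=|\mathrm{im}\,\gamma|\,|\ker\gamma|$ together with the two stated inclusions. As a bonus, your count $|\ker\tau|/|\mathrm{im}\,\gamma|=|\ker\gamma|/|\mathrm{im}\,\tau|$ also justifies the remark following the corollary that $|\mathrm{Out}_M^M(G)|=|\ker\gamma|/|\mathrm{im}\,\tau|$ when this quantity exceeds $1$.
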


We remark that the proof of the above also tells us
that if $\text{im }\tau\neq\ker\gamma$, then $|\textup{Out}_{M}^{M}(G)|=|\ker\gamma|/|\text{im }\tau|$.

 \begin{theorem} \label{AbelianMaximal}
  Let $G$ be a finite non-cyclic $p$-group with $|G|\ge p^3$ and with an abelian maximal subgroup $A$. Then $|G|$ divides $|\textup{Aut}(G)|$.
 \end{theorem}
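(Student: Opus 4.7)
We will apply Corollary~\ref{Webb} with $M=A$, and, when necessary, supplement Webb's contribution with automorphisms that act non-trivially on $A$. The abelian case of the theorem is standard, so we may assume $G$ is non-abelian. Then $Z(G)\le A$: otherwise a central element $z\in Z(G)\setminus A$ together with $A$ would generate $G$, forcing $G$ abelian. Fix $g\in G\setminus A$ with $G/A=\langle gA\rangle$.

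Since $A$ is abelian, $Z(M)=A$ and the Webb homomorphisms $\gamma,\tau\colon A\to A$ take the form $\gamma(a)=[a,g]$ and $\tau(a)=a\cdot a^{g}\cdots a^{g^{p-1}}$. A direct reading gives $\ker\gamma=C_A(g)=Z(G)$ and, by Theorem~\ref{Theorem63}(\emph{i}), $\text{im }\gamma=[A,g]=G'$. In particular $\text{im }\tau\subseteq\ker\gamma=Z(G)$, and the remark following Corollary~\ref{Webb} yields
\[
|\text{Out}_{A}^{A}(G)| \;=\; \frac{|Z(G)|}{|\text{im }\tau|}.
\]
If $\text{im }\tau=1$, then $|Z(G)|$ divides $|\text{Out}(G)|$, which is equivalent to $|G|$ dividing $|\text{Aut}(G)|$, and the theorem follows.

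The remaining case is $\text{im }\tau\ne1$, where one must produce $|\text{im }\tau|$ further outer classes lying outside $\text{Out}_{A}^{A}(G)$. The plan is to obtain them from automorphisms acting non-trivially on $A$. Each $\phi\in\text{Aut}(A)$ commuting with conjugation by $g$ and fixing $g^p$ extends to an automorphism $\widetilde\phi$ of $G$ by $a\mapsto\phi(a)$ and $g\mapsto g$; such $\widetilde\phi$ lie in $\text{Aut}(G)\setminus\text{Aut}_{A}^{A}(G)$ whenever $\phi\ne\text{id}$. If moreover $A$ is not characteristic in $G$, then $\text{Aut}(G)$ acts non-trivially on the set of abelian maximal subgroups of $G$, providing additional outer classes.

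The main obstacle is to show that these two sources jointly supply the missing factor $|\text{im }\tau|$ in every instance. I expect a dichotomy on whether $A$ is characteristic: when $A$ is characteristic, one argues that the centralizer in $\text{Aut}(A)$ of the $g$-action, modulo the image of the inner automorphisms fixing $A$ setwise, is large enough; when $A$ is not characteristic, the orbit of $A$ in the set of abelian maximal subgroups provides the remaining contribution. The module structure of $A$ over $\langle g\rangle$, which forces $\text{im }\tau\ne1$ in the first place, is what ultimately drives the accounting in both branches.
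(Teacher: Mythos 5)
Your proposal is sound up to the point where the real difficulty begins, and then it stops. The reduction is fine: with $M=A$ one has $Z(M)=A$, $\ker\gamma=C_A(g)=Z(G)$, $\operatorname{im}\gamma=G'$, and the remark after Corollary~\ref{Webb} gives $|\text{Out}_A^A(G)|=|Z(G)|/|\text{im }\tau|$ provided $\text{im }\tau\neq\ker\gamma$; so the case $\text{im }\tau=1$ is done. But the case $\text{im }\tau\neq 1$ is the entire content of the theorem, and there you offer only a programme (``I expect a dichotomy\dots the main obstacle is to show that these two sources jointly supply the missing factor''), not an argument. Worse, there is concrete reason to doubt the programme as stated. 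Take $G=M_{p^n}=\langle a,g\mid a^{p^{n-1}}=g^p=1,\ a^g=a^{1+p^{n-2}}\rangle$ with $p$ odd and $A=\langle a\rangle$: here $\tau(a)=a^p$, so $\text{im }\tau=\langle a^p\rangle=Z(G)=\ker\gamma$ and Webb contributes \emph{nothing}; $A$ is characteristic for $n\ge 4$; and the automorphisms of $A$ commuting with the $g$-action, taken modulo the inner ones, account for only $p^{n-3}$ outer classes while $|Z(G)|=p^{n-2}$ are needed. The missing classes must come from automorphisms that move $g$ (e.g.\ $g\mapsto ga^i$), which your two named sources do not cover, so the accounting does not close even in this very simple example.

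The paper avoids this bookkeeping entirely by working with \emph{central} automorphisms rather than Webb's $\text{Out}_M^M$ machinery. It first notes that it suffices to prove $|\text{Aut}^Z(G)|\ge|Z_2(G)|$, since $|\text{Aut}^Z(G)\,\text{Inn}(G)|=|\text{Aut}^Z(G)|\,|G/Z(G)|/|Z_2(G)/Z(G)|$. Using Otto's theorem to discard abelian direct factors, Adney--Yen gives $|\text{Aut}^Z(G)|=|\text{Hom}(G/G',Z(G))|$, and the abelian maximal subgroup enters only through the numerical identities $|G:G'|=p|Z(G)|$ and $|Z_2(G)|=|G'\cap Z(G)|\cdot|Z(G)|$ (Corollary~\ref{Corollary64} applied to $G$ and to $G/Z(G)$), together with $\exp(G'\cap Z(G))=p$. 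A short case analysis on whether $\exp(G/G')\ge\exp Z(G)$ then exhibits enough homomorphisms $G/G'\to Z(G)$. If you want to salvage your approach you would need to control $\text{im }\tau$ and construct the compensating outer automorphisms explicitly in all cases; the central-automorphism count is the mechanism that makes the theorem tractable, and as it stands your argument has a genuine gap.
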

 
 \begin{proof}
 We work with the subgroup $\text{Aut}^Z(G)$ of central automorphisms in $\text{Aut}(G)$. Now
 \[
  |\text{Aut}^Z(G) \text{Inn}(G)|=\frac{|\text{Aut}^Z(G)|\cdot |\text{Inn}(G)|}{|\text{Aut}^Z(G)\cap \text{Inn}(G)|} = \frac{|\text{Aut}^Z(G)|\cdot |G/Z(G)|}{|Z_2(G)/Z(G)|}
 \]
and hence it suffices to show that $|\text{Aut}^Z(G)|\ge |Z_2(G)|$. 

According to Otto \cite{Otto}, when $G$ is the direct product of an abelian $p$-group $H$ and a $p$-group $K$ having no abelian direct factor, then one has $|H|\cdot |\text{Aut}(K)|_p$ divides $|\text{Aut}(G)|$. Hence, we may assume that $G$ has no abelian direct factor. It then follows by Adney and Yen \cite{key-PNgroup} that $|\text{Aut}^Z(G)|=|\text{Hom}(G/G',Z(G))|$.
 
 By Corollary \ref{Corollary64}, we have $|G:Z(G)|=p|G'|$ and equally, 
\begin{equation}\label{Cor2.2}
|G:G'|=p|Z(G)|. 
\end{equation}

 Let $H=G/Z(G)$. Then $A/Z(G)$ is an abelian maximal subgroup of $H$. Applying Corollary \ref{Corollary64} to $H$ yields
 \[
  |H:H'|=p|Z(H)|,
 \]
 so
 \[
  |G:G'Z(G)|=p|Z_2(G):Z(G)|.
 \]
 Hence
 \begin{equation}\label{**}
  |Z_2(G)|=\frac{1}{p}\cdot |G'\cap Z(G)|\cdot |G:G'|.
 \end{equation}
 
Combining this with (\ref{Cor2.2}) gives
 \begin{equation} \label{3}
  |Z_2(G)|=|G' \cap Z(G)|\cdot |Z(G)|.
 \end{equation}
 
 Next we have $G'=\{[a,g] \mid a\in A \}$ by Theorem \ref{Theorem63}. By Webb's construction, we know that $\text{im}\, \gamma \subseteq \ker \tau$ and here $\text{im}\, \gamma =[A,g] = G'$. Now for $a\in A$ and $g$ as in Theorem \ref{Theorem63}, we have
 \[
  [a,g]^{g^{p-1}+\ldots +g+1} =1.
 \]
 
 We claim that $\exp(G' \cap Z(G))=p$. For, if $[a,g]\in Z(G)$, then $[a,g]^g=[a,g]$. Consequently,  
 \[
  1=[a,g]^{g^{p-1}+\ldots +g+1} = [a,g]^p
 \]
and thus $o([a,g])\le p$. 
 
 Clearly the minimal number $d:=d(G)$ of generators of $G$ is at least 2. In order to proceed, we divide into the following two cases: (a) $\exp (G/G')\ge \exp Z(G)$, and (b) $\exp (G/G') \le \exp Z(G)$.
 
 \textbf{Case (a):} Suppose $\exp(G/G')\ge \exp Z(G)$. We express $G/G'$ as
 \[
  G/G' = \langle g_1 G' \rangle \times \langle g_2 G' \rangle \times \ldots \times \langle g_d G' \rangle
 \]
 where $o(g_1 G')= \exp (G/G')$ and by assumption $d\ge 2$.
 
 We consider homomorphisms from $G/G'$ to $Z(G)$. The element $g_1 G'$ may be mapped to any element of $Z(G)$, and $g_2 G'$ may be mapped to any element of $G' \cap Z(G)$, which is of exponent $p$.
 
 Thus, with the aid of (\ref{3}), 
 \[
  |\text{Hom}(G/G',Z(G))|\ge |Z(G)|\cdot |G' \cap Z(G)| =|Z_2(G)|.
 \]
 
 \textbf{Case (b):} Suppose $\exp(G/G') \le \exp Z(G)$. Similarly we express $Z(G)$ as
 \[
  Z(G) = \langle z_1 \rangle \times \langle z_2 \rangle \times \ldots \times \langle z_r \rangle
 \]
 where $r=d(Z(G))$ and $o(z_1)=\exp Z(G)$.
 
 We consider two families of homomorphisms from $G/G'$ to $Z(G)$. First,
 \[
 \quad  G/G' \rightarrow Z(G)
 \]
 \[
  g_i G' \mapsto z_1^{b_i}
 \]
 for $1\le i\le d$, where $b_i$ is such that $o(z_1^{b_i})$ divides $o(g_i G')$. This gives rise to $|G/G'|$ homomorphisms.
 
 Next, we consider all homomorphisms from $G/G'$ to $Z(G)$ where each element $g_i G'$ is mapped to any element of order $p$ in $\langle z_2 \rangle \times \ldots \times \langle z_r \rangle$. 
 This gives
 \[
 (p^{r-1})^d\ge p^{r-1}=\frac{|\Omega_1(Z(G))|}{p} \ge \frac{|G' \cap Z(G)|}{p}
 \]
 different homomorphisms.
 
 Multiplying both together and then using (\ref{**}), we obtain \[|\text{Hom}(G/G',Z(G))|\ge \frac{|G/G'|\cdot |G' \cap Z(G)|}{p} = |Z_2(G)|.\]

 \end{proof}

\section{Elementary abelian centre}

Let $G$ be a finite $p$-group with elementary abelian centre.  In order to prove that $|G|$ divides $|\text{Aut}(G)|$, we may assume, upon consultation of \cite{Faudree} and the final remarks in \cite{Hummel}, that $Z(G)<\Phi(G)$. One of the following three cases exclusively occurs.

\textbf{Case 1.} $Z(M)=Z(G)$ for some maximal subgroup $M$ of $G$.

\textbf{Case 2.} $Z(M)\supset Z(G)$ for all maximal subgroups $M$ of $G$.
Then either 

\textbf{(A)} $C_{G}(Z(\Phi(G)))\neq\Phi(G)$; or 

\textbf{(B)} $C_{G}(Z(\Phi(G)))=\Phi(G)$.

$\ $

The main result of this section is to show that if $G$ is a finite $p$-group with elementary
abelian centre and not in Case 2B, then $|G|$ divides $|\text{Aut}(G)|$. 

With regard to Case 2B, we would also like to mention another long-standing conjecture for finite $p$-groups: does there always exist a non-inner automorphism of order $p$? The case 2B is the only remaining case for this conjecture (see  \cite{DS}).

%Groups satisfying the condition in Case 2 are termed \emph{Frattinian
%$p$-groups} by Schmid \cite{key-Schmid}. Additionally, groups satisfying
%condition (B) of Case 2 are called \emph{strongly Frattinian $p$-groups}.

First we deal with Case 1.
\begin{lemma}
\label{lem:1}\cite[Lemma 2.1(b)]{key-Muller} Suppose $M$ is a maximal subgroup
of $G$. If $Z(M)\subseteq Z(G)$ then $\textup{Aut}_{M}^{Z}(G)$ is
a non-trivial elementary abelian $p$-group such that $\textup{Aut}_{M}^{Z}(G)\cap\textup{Inn}(G)=1$.
\end{lemma}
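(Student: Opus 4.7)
The plan is to parameterise $\text{Aut}_M^Z(G)$ explicitly. Fix $g\in G\setminus M$, so that $G=\langle g,M\rangle$ and $g^p\in M$. Any $\alpha\in\text{Aut}_M^Z(G)$ is the identity on $M$ and satisfies $\alpha(g)\in gZ(G)$, so $\alpha$ is determined by the unique $z\in Z(G)$ with $\alpha(g)=gz$. The requirement $\alpha(g^p)=g^p$, together with $(gz)^p=g^pz^p$ from the centrality of $z$, forces $z^p=1$. Conversely, for each $z\in\Omega_1(Z(G))$ the rule $\alpha_z(g)=gz$, $\alpha_z|_M=\text{id}$ extends to a genuine automorphism, since conjugation of $m\in M$ by $gz$ agrees with conjugation by $g$. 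Thus $z\mapsto\alpha_z$ is a bijection $\Omega_1(Z(G))\to\text{Aut}_M^Z(G)$.

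Next I would verify that this bijection is a group isomorphism. Under the hypothesis $Z(M)\subseteq Z(G)$, and using $Z(G)\subseteq M$ (which is automatic in the paper's applications, since $Z(G)\subseteq\Phi(G)\subseteq M$), every $z'\in Z(G)$ is fixed by $\alpha_z$, so $\alpha_z\alpha_{z'}(g)=\alpha_z(gz')=gzz'=\alpha_{zz'}(g)$. Hence $\text{Aut}_M^Z(G)\cong\Omega_1(Z(G))$, which is elementary abelian of exponent $p$, and non-trivial since $Z(G)\neq 1$ for any non-trivial finite $p$-group.

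For the intersection with $\text{Inn}(G)$, suppose $\alpha_z=\iota_h$ for some $h\in G$. Then $\iota_h$ fixes $M$ pointwise, so $h\in C_G(M)$; my aim is to conclude $h\in Z(G)$. If $h\in M$, then $h\in Z(M)\subseteq Z(G)$ by hypothesis. If $h\notin M$, then $G=\langle h,M\rangle$ and $h$ commutes with each generator, hence with all of $G$. Either way $\iota_h=\text{id}$, which forces $z=1$. The hypothesis $Z(M)\subseteq Z(G)$ is essential precisely in the first case: elements of $Z(M)\setminus Z(G)$ would otherwise produce non-trivial inner automorphisms fixing $M$ pointwise, breaking the intersection claim. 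This is the only delicate point; the rest is routine verification.
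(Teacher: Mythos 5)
Your proof is correct, and it is essentially the argument the paper is implicitly relying on: the lemma is quoted from M\"uller without proof, and the remark immediately following it records exactly your parameterization, namely $\textup{Aut}_M^Z(G)\cong\textup{Hom}(G/M,Z(M))\cong\Omega_1(Z(M))$. Under your standing assumption $Z(G)\subseteq M$, the hypothesis $Z(M)\subseteq Z(G)$ combines with the general inclusion $Z(G)\cap M\subseteq Z(M)$ to give $Z(M)=Z(G)$, so your $\Omega_1(Z(G))$ is the same group. The one point worth making explicit is that your reduction to $Z(G)\subseteq M$ is not merely a convenience: the converse step genuinely fails without it. If some $z\in\Omega_1(Z(G))$ lies in the coset $g^{p-1}M=g^{-1}M$, say $z=g^{p-1}m_0$ with $m_0\in M$, then $\alpha_z(z)=g^{p-1}z^{p-1}m_0=z\cdot z^{p-1}=1$, so $\alpha_z$ is not injective; concretely, for $G=Q_8\times C_2$ and $M=Q_8\times 1$ one finds $|\textup{Aut}_M^Z(G)|=|\Omega_1(Z(M))|=2$ rather than $|\Omega_1(Z(G))|=4$. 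So in full generality the parameter group is $\Omega_1(Z(M))$, exactly as in the paper's remark, not $\Omega_1(Z(G))$. This costs nothing here: Section 3 operates under $Z(G)<\Phi(G)\le M$, and in Lemma \ref{lem:Two} the hypothesis $Z(M)=Z(G)$ already forces $Z(G)\subseteq M$. Your verification of the extension (the relations $(gz)m(gz)^{-1}=gmg^{-1}$ and $(gz)^p=g^p$), of the isomorphism $z\mapsto\alpha_z$, and of $\textup{Aut}_M^Z(G)\cap\textup{Inn}(G)=1$ via the two cases $h\in M$ and $h\notin M$ is complete and correct.
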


We comment that the proof of this result in \cite{key-Muller} tells
us that \[
|\textup{Aut}_{M}^{Z}(G)|=|\textup{Hom}(G/M,Z(M))|=|\Omega_{1}(Z(M))|.\]

\begin{lemma}
\label{lem:Two}Let $G$ be a finite $p$-group with elementary abelian
centre. Suppose that $Z(M)=Z(G)$ for some maximal subgroup $M$ of
$G$. Then $|G|$ divides $|\textup{Aut}(G)|$.
\end{lemma}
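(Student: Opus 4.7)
The plan is to exploit Lemma \ref{lem:1} directly: since $Z(M) = Z(G) \subseteq Z(G)$, the hypothesis of that lemma is satisfied, so $\textup{Aut}_M^Z(G)$ is a non-trivial elementary abelian $p$-group and, more importantly, $\textup{Aut}_M^Z(G) \cap \textup{Inn}(G) = 1$.

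Next I would use the order formula recorded after Lemma \ref{lem:1}, namely $|\textup{Aut}_M^Z(G)| = |\Omega_1(Z(M))|$. Because $Z(G)$ is elementary abelian by hypothesis, and $Z(M) = Z(G)$, we have $\Omega_1(Z(M)) = Z(M) = Z(G)$, giving the clean identity
\[
|\textup{Aut}_M^Z(G)| = |Z(G)|.
\]

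Since $\textup{Aut}_M^Z(G)$ and $\textup{Inn}(G)$ intersect trivially inside $\textup{Aut}(G)$, the subgroup $\textup{Aut}_M^Z(G)\,\textup{Inn}(G)$ has order
\[
|\textup{Aut}_M^Z(G)| \cdot |\textup{Inn}(G)| = |Z(G)| \cdot |G : Z(G)| = |G|,
\]
and Lagrange's theorem yields the desired divisibility $|G|$ divides $|\textup{Aut}(G)|$.

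I do not anticipate any real obstacle here: the lemma of M\"uller essentially hands over a subgroup of central automorphisms of precisely the right size, and the elementary abelian hypothesis on $Z(G)$ is exactly what is needed to turn $|\Omega_1(Z(M))|$ into the full $|Z(G)|$. The only small point to double-check is that Lemma \ref{lem:1} produces an honest subgroup transverse to $\textup{Inn}(G)$, which it does by the statement that their intersection is trivial.
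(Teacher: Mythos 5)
Your argument is correct and is essentially the paper's own proof: both invoke Lemma \ref{lem:1} together with the remark that $|\textup{Aut}_M^Z(G)| = |\Omega_1(Z(M))|$, use the elementary abelian hypothesis to identify this with $|Z(G)|$, and combine with the trivial intersection with $\textup{Inn}(G)$ to get a subgroup of order $|G|$ inside $\textup{Aut}(G)$. No issues.
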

\begin{proof}
Using Lemma \ref{lem:1} and the above comment, it follows that
\[|\text{Out}(G)|_{p}\ge|\text{Aut}_{M}^{Z}(G)|=|Z(G)|.\] Hence $|G|$
divides $|\text{Aut}(G)|$ as required.
\end{proof}

Next, suppose $G$ is as in Case 2(A). We will need the following.
\begin{theorem}
\cite{key-Webb} \label{thm:Webb}Let $G$ be a finite non-abelian
$p$-group. Then $p$ divides the order of $\textup{Out}_Z(G)$.
\end{theorem}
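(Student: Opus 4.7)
My goal is to exhibit a non-inner automorphism of $G$ that centralizes $Z(G)$. Any such element represents a nontrivial element in the $p$-group $\textup{Out}_Z(G)$, so its existence forces $p\mid|\textup{Out}_Z(G)|$.

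Since $G$ is non-abelian, $G/Z(G)$ is a nontrivial $p$-group and therefore admits a maximal subgroup, whose preimage in $G$ is a maximal subgroup $M$ with $Z(G)\subseteq M$. Every element of $\textup{Aut}_M^M(G)$ centralizes $M$ and hence $Z(G)$, so it suffices to produce a non-inner element of $\textup{Aut}_M^M(G)$ for some such $M$ --- precisely the setting of Corollary~\ref{Webb}. Fix $g\in G$ with $G=\langle M,g\rangle$ and form the maps $\tau,\gamma$ on $Z(M)$. The containment $Z(G)\subseteq M$ yields
\[
\ker\gamma \;=\; Z(G)\cap M \;=\; Z(G).
\]
If $\textup{im}\,\tau\ne Z(G)$, Corollary~\ref{Webb} immediately hands us the required non-inner automorphism in $\textup{Aut}_M^M(G)\subseteq\textup{Aut}_Z(G)$, and we are done.

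The main obstacle is the case $\textup{im}\,\tau=Z(G)$, for which the remark following Corollary~\ref{Webb} gives $|\textup{Out}_M^M(G)|=1$ and this particular $M$ is useless. My plan is to exploit the freedom in the choice of $M$: the maximal subgroups of $G$ containing $Z(G)$ correspond bijectively to hyperplanes of $G/\Phi(G)$ lying above $Z(G)\Phi(G)/\Phi(G)$, so several candidates are typically available. Either some other choice $M'$ satisfies $\textup{im}\,\tau'\ne Z(G)$, reducing us to the previous paragraph, or the equality $\textup{im}\,\tau_M=Z(G)$ persists uniformly for every admissible $M$.

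In the uniform-equality subcase --- where I expect the bulk of the technical work --- my plan is to combine the constraints coming from different maximal subgroups. Concretely, I would encode the resulting $p$-power and commutator relations on $Z(M_i)$ as a $1$-cocycle on $G/Z(G)$ with values in $Z(G)$, and show that the uniform equality $\textup{im}\,\tau_{M_i}=Z(G)$ obstructs this cocycle from being a coboundary. Exponentiating it would then yield an explicit non-inner automorphism $\alpha$ of $G$ fixing $Z(G)$ pointwise, completing the proof.
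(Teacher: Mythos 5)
The paper does not actually prove this statement---it is quoted from Webb \cite{key-Webb}---so there is no internal proof to compare against; your attempt has to stand on its own. Its first half does: for a maximal subgroup $M$ with $Z(G)\subseteq M$ one has $\ker\gamma=Z(G)\cap M=Z(G)$, and if $\text{im}\,\tau\ne Z(G)$ then Corollary~\ref{Webb} yields a non-inner automorphism of $p$-power order acting trivially on $M$, hence on $Z(G)$, whose image is a nontrivial $p$-element of $\textup{Out}_Z(G)$. That part is correct.

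The gap is the remaining case, where $\text{im}\,\tau_M=\ker\gamma_M=Z(G)$ for \emph{every} maximal subgroup $M\supseteq Z(G)$: this is where the entire content of Gasch\"utz's and Webb's theorems lives, and your proposal offers only a restatement of the goal, not an argument. Saying you will ``encode the constraints as a $1$-cocycle on $G/Z(G)$ with values in $Z(G)$ and show that the uniform equality obstructs it from being a coboundary'' gives no mechanism for producing that obstruction. Worse, the plan as stated points in a direction that cannot work in general: since $G/Z(G)$ acts trivially on $Z(G)$, $1$-cocycles are just homomorphisms $G/G'Z(G)\to Z(G)$, the automorphisms $g\mapsto g\,f(gZ(G))$ they define act trivially on $G/Z(G)$, and the coboundaries correspond exactly to conjugation by elements of $Z_2(G)$; this quotient (essentially $\textup{Out}^Z_Z(G)$) is trivial for many non-abelian $p$-groups (e.g.\ extraspecial groups, where the commutator form identifies $G/Z(G)$ with all of $\textup{Hom}(G/G'Z(G),Z(G))$). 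So in the hard case the desired outer automorphism must in general act nontrivially on $G/Z(G)$ and cannot centralize any maximal subgroup together with the corresponding quotient (that is precisely what Corollary~\ref{Webb} rules out for every $M$); a genuinely different construction is required, and none is supplied. As written, your argument proves the theorem only under the extra hypothesis that some maximal subgroup $M\supseteq Z(G)$ satisfies $\text{im}\,\tau_M\ne Z(G)$.
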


Now we present our result.
\begin{proposition}
\label{pro:One}Let $G$ be a finite $p$-group with elementary abelian
centre, such that $C_{G}(Z(\Phi(G)))\ne\Phi(G)$ and $Z(M)\supset Z(G)$ for all maximal subgroups $M$ of $G$. Then $|G|$ divides $|\textup{Aut}(G)|$.
\end{proposition}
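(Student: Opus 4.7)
The plan is to construct an injective homomorphism $Z(G)\to\textup{Out}(G)$ using a central-automorphism construction on a suitable maximal subgroup. By hypothesis I pick $g\in C_G(Z(\Phi(G)))\setminus\Phi(G)$ and choose a maximal subgroup $M$ of $G$ with $g\notin M$, so that $G/M=\langle gM\rangle$. Consider the subgroup $\textup{Aut}_M^Z(G)$ of automorphisms fixing $M$ pointwise and inducing the identity on $G/Z(G)$; by a direct computation (of the same kind as in the discussion preceding Lemma \ref{lem:Two}) one has $|\textup{Aut}_M^Z(G)|=|Z(G)|$.

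For each $z\in Z(G)$, I would define $\phi_z$ by $\phi_z|_M=\textup{id}$ and $\phi_z(g)=gz$. The homomorphism compatibility reduces to $z\in Z(M)$ (automatic since $Z(G)\subseteq Z(M)$) and $(gz)^p=g^p$; the latter holds because $g\in C_G(Z(\Phi(G)))$ centralises $z\in Z(G)\subseteq Z(\Phi(G))$, giving $(gz)^p=g^pz^p=g^p$ by elementary abelianness of $Z(G)$. Thus $z\mapsto\phi_z$ realises $Z(G)$ as $\textup{Aut}_M^Z(G)$.

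To obtain $|Z(G)|\mid|\textup{Out}(G)|$ one then needs $\textup{Aut}_M^Z(G)\cap\textup{Inn}(G)=1$; a standard identification shows this is equivalent to $Z_2(G)\cap Z(M)=Z(G)$. A key partial result from the hypothesis is
\[
Z_2(G)\cap Z(M)\cap\Phi(G)=Z(G).
\]
Indeed, any $x$ in this intersection lies in $Z_2(G)\cap\Phi(G)$, and a standard commutator computation shows $Z_2(G)\subseteq C_G(\Phi(G))$ (using $\Phi(G)=G'G^p$, centrality of $[Z_2(G),G]$, and the fact that $p$-th powers of elements of $Z(G)$ are trivial). Hence $x\in Z(\Phi(G))$, and as $g\in C_G(Z(\Phi(G)))$ the element $x$ commutes with $g$; combined with $x\in Z(M)$, this forces the inner automorphism given by conjugation by $x$ to fix both $M$ pointwise and $g$, so $x\in Z(G)$.

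The hardest step will be extending this to $Z_2(G)\cap Z(M)\subseteq\Phi(G)$. When $Z(M)\subseteq\Phi(G)$ (equivalently, $Z(M)\subseteq Z(\Phi(G))$) this is automatic and the argument concludes cleanly. Otherwise, the strategy is either to refine the choice of $M$ among the maximal subgroups avoiding $g$ (noting that $g$ constrains $M$ to miss only one hyperplane in $G/\Phi(G)$, leaving considerable freedom when $d(G)\geq 2$), or to supplement the construction with the full group of central automorphisms $\textup{Aut}^Z(G)$ of order $|Z(G)|^{d(G)}$ (by the Adney--Yen formula, under the standard reduction that $G$ has no abelian direct factor). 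In the latter case, the identity $|\textup{Aut}^Z(G)\cdot\textup{Inn}(G)|=|Z(G)|^{d(G)}\cdot|G|/|Z_2(G)|$, combined with a sharpening of the bound on $|Z_2(G)|$ obtained by factoring $Z_2(G)/Z(G)\hookrightarrow\textup{Hom}(G/\Phi(G),Z(G))$ through the subspace of homomorphisms vanishing on $g\Phi(G)$, should close any remaining gap and yield $|G|\mid|\textup{Aut}(G)|$.
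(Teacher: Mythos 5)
Your construction is sound as far as it goes: for $z\in Z(G)$ the map $\phi_z$ fixing $M$ pointwise with $\phi_z(g)=gz$ is indeed an automorphism (note that $(gz)^p=g^pz^p=g^p$ already because $z$ is central and $Z(G)$ is elementary abelian; the hypothesis $g\in C_G(Z(\Phi(G)))$ plays no role there), the obstruction to these being non-inner is correctly identified with $(Z_2(G)\cap Z(M))/Z(G)$, and your partial result $Z_2(G)\cap Z(M)\cap\Phi(G)=Z(G)$ is a genuine consequence of the hypothesis, via the correct observation that $Z_2(G)\le C_G(\Phi(G))$ when $Z(G)$ is elementary abelian. The problem is the step you yourself flag as hardest: $Z_2(G)\cap Z(M)\subseteq\Phi(G)$ is not merely unproved, it is \emph{false} in exactly the situation the proposition is really about. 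Under the standing hypotheses (Case 2A), M\"{u}ller's Lemma 2.2 --- the structural input the paper relies on --- gives maximal subgroups $M,N$ with $G=RS$ a central product, $R=Z(M)Z(N)$, $R/Z(R)\cong C_p\times C_p$ and $Z(R)=Z(G)$. Then $R$ has class $2$, so $[G,Z(M)]\le[G,R]=R'\le Z(R)=Z(G)$, i.e.\ $Z(M)\le Z_2(G)$; hence $Z_2(G)\cap Z(M)=Z(M)\supsetneq Z(G)$, and (as your own argument shows) $Z(M)\not\le\Phi(G)$. Your construction then yields only $|Z(G)|/p$ outer classes, and this deficit of exactly one factor of $p$ is the entire difficulty of the proposition.

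Neither fallback closes it. Refining the choice of $M$ does not obviously help: the elements $x\in Z_2(G)\setminus\Phi(G)$ with $|[x,G]|=p$ that obstruct you satisfy $C_G(x)=M$, and in the M\"{u}ller configuration such elements exist in every maximal subgroup of the relevant type; you would need to prove that some admissible $M$ avoids them all, which is not done and is where the real content lies. The Adney--Yen counting route is essentially the Section 2 argument for groups with an abelian maximal subgroup; without that hypothesis there is no reason for $|\textup{Hom}(G/G',Z(G))|\ge|Z_2(G)|$ to hold, and no argument is offered. The paper recovers the missing factor of $p$ by a different mechanism altogether: Webb's theorem applied to the central factor $S=C_G(R)=M\cap N$ produces a non-inner $p$-automorphism $\beta$ of $S$, M\"{u}ller's extension result lifts it to a non-inner $\gamma\in\textup{Aut}_Z(G)$ acting trivially on $R$, and one checks that $\overline{\gamma}\notin\textup{Out}_M^M(G)$. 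Some ingredient of this kind --- an outer automorphism not arising from your family $\phi_z$ --- is indispensable, and its absence is a genuine gap in your proof.
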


\begin{proof}
By M\"{u}ller \cite[proof of Lemma 2.2]{key-Muller}, there exist maximal subgroups $M$ and $N$
such that $G=Z(M)N$ and $Z(G)=Z(M)\cap N$.
Write $G/M=\langle gM\rangle$ for some $g\in G$, and consider the homomorphism
\[
\begin{matrix}
\tau_{M} & \colon & Z(M) & \longrightarrow & Z(M)\hfill
\\
& & m & \longmapsto & g^{-p}(gm)^{p}=m^{g^{p-1}+ \cdots+g+1}.
\end{matrix}
\]
Since
\[
\frac{Z(M)}{Z(G)}
=
\frac{Z(M)}{Z(M)\cap N}
\cong
\frac{Z(M)N}{N}
=
\frac{G}{N}
\] 
and $Z(G)\subseteq \ker\tau_M$, it follows that $|\text{im }\tau_M|\le p$.
If $\text{im }\tau_{M}=1$, the remark after Corollary \ref{Webb} implies that $|\text{Out}_{M}^{M}(G)|=|Z(G)|$ and 
we are done. Hence we assume that $\text{im }\tau_{M}\cong C_{p}$.
If $Z(G)\cong C_{p}$, then $|G|$ divides $|\text{Aut}(G)|$ by Gasch\"{u}tz \cite{Gaschuetz}.
So we may assume that $|Z(G)|>p$. Again by the same remark, we
have $|\text{Out}_{M}^{M}(G)|=|Z(G)|/p$. 

By \cite[Lemma 2.2]{key-Muller}, it follows that $G$ is a central product of subgroups
$R$ and $S$, where $R/Z(R)\cong C_p \times C_p$ and $Z(R)=Z(G)=Z(S)=R\cap S$. Furthermore, $R=Z(M)Z(N)$ and $S=M\cap N=C_{G}(R)$. 

 By Theorem \ref{thm:Webb}, there exists a non-inner automorphism $\beta\in\text{Aut}_{Z}(S)$ of $p$-power order. As observed by M\"{u}ller \cite[Section 3]{key-Muller}, the automorphism $\beta$ extends uniquely to some non-inner  $\gamma\in\text{Aut}_{Z}(G)$ with trivial action on $R$.

Certainly $\gamma$ does not act trivially
on $M\cap N=S$, so $\gamma\not\in\text{Aut}_{M}(G)$. Let $\overline{\gamma}$ be the image of $\gamma$ in $\text{Out}(G)$. We now show that $\overline{\gamma}\not \in \text{Out}^M_M(G)$. On the contrary, suppose that $\gamma = \rho. \iota$ where $\rho \in \text{Aut}^M_M(G)$ and $\iota \in \text{Inn}(G)$. As $S\le M$, we have $\beta(s)=\gamma(s)=\rho (s)^x=s^x$ for all $s\in S$ and a fixed $x\in G$. Writing $x=rs'$ for some $r\in R$, $s'\in S$ and recalling that $S=C_G(R)$, we have $\beta(s)=s^{s'}$.
This implies that $\beta \in \text{Inn}(S)$, a contradiction.
Thus \[|\text{Out}(G)|_{p}\ge|\langle\overline{\gamma},\text{Out}_{M}^{M}(G)\rangle|\ge|Z(G)|.\]
It follows that $|G|$ divides $|\text{Aut}(G)|$.
\end{proof}

\thanks{The authors are grateful to the various people who helped to read and
improve this manuscript.}

\end{document}